\documentclass[11pt]{amsart}

\usepackage{amsmath, amssymb, amsthm, mathtools}
\usepackage{geometry}
\usepackage{enumitem}
\usepackage{hyperref}
\usepackage{tikz}
\usepackage{float}
\usepackage{placeins}
\usepackage{graphicx}
\usepackage{subcaption}
\usepackage[utf8]{inputenc}
\usepackage{stmaryrd}

\usetikzlibrary{trees, decorations.markings, calc}

\newtheorem{theorem}{Theorem}[section]
\newtheorem{lemma}[theorem]{Lemma}

\theoremstyle{definition}

\theoremstyle{remark}
\newtheorem{remark}[theorem]{Remark}

\newcommand{\Z}{\mathbb{Z}}
\newcommand{\Q}{\mathbb{Q}}

\tikzset{
  vtx/.style={circle, fill=black, inner sep=2pt},
  arr/.style={->, thick}
}

\title[Plumbed $3$--Manifolds and Neumann Moves]{Plumbed $3$--Manifolds and Neumann Moves: \\
From Weakly Negative Definite to Negative Definite}

\begin{document}
\author{Noah Pope}
\maketitle
\begingroup
\renewcommand\thefootnote{}
\footnotetext{
\textbf{MSC2020.} 57K31 (primary), 57K16, 57R65 (secondary).

\textit{Key words and phrases.}
Plumbed $3$--manifolds, plumbing trees, Neumann moves, negative definite intersection forms, framing matrices, diagonalization algorithm.
}
\endgroup
\begin{abstract}
We give a constructive proof that every weakly negative definite plumbing tree can be transformed into a negative definite one by a finite sequence of Neumann moves. The argument combines Neumann's plumbing calculus with the diagonalization algorithm of Duchon, Eisenbud, and Neumann, which extracts the eigenvalues of the framing matrix directly from the combinatorics of the tree. We show that any positive eigenvalues are supported on linear branches and can be eliminated systematically via controlled applications of Neumann moves. This provides an explicit algorithm reducing weakly negative definite plumbing trees to negative definite ones.
\end{abstract}
\bigskip

\section{Introduction}

The motivation for this work originates in a remark of Gukov, Katzarkov, and
Svoboda~\cite{GukovKatzarkovSvoboda}, where it is stated in footnote 6 that any
weakly negative definite plumbing tree can be transformed into a negative definite one
by a sequence of Neumann moves, although no proof is provided. A proof of
this statement appears later in the appendix of Harichurn, N{\'e}methi,
and Svoboda~\cite{HarichurnNemethiSvoboda}. The goal of this paper is to present a
more detailed and fully constructive version of this proof. In particular, we
give an explicit algorithm showing how any weakly negative definite plumbing
tree can be converted into a negative definite one via a finite sequence of
Neumann moves.

\begin{theorem}[Main Theorem]
Every weakly negative definite plumbed $3$--manifold is negative definite.
Equivalently, every weakly negative definite plumbing tree can be transformed
into a negative definite one by a finite sequence of Neumann moves.
\end{theorem}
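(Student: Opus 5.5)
We will argue by induction on the number of positive eigenvalues of the framing matrix $M(\Gamma)$ of a weakly negative definite plumbing tree $\Gamma$. Recall that weak negative definiteness means that $M$ is invertible and that $M^{-1}$ is negative definite when restricted to the span of the vertices of degree at least $3$. Since every Neumann move preserves the oriented $3$--manifold, it suffices to find a sequence of moves that strictly lowers the number of positive eigenvalues while keeping the tree weakly negative definite.

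To count and locate the positive eigenvalues we use the Duchon--Eisenbud--Neumann diagonalization. Root the tree and process the leaves toward the root. Each vertex $v$ receives a number
\[
d_v = e_v - \sum_{w \text{ child of } v} \frac{1}{d_w},
\]
where we assume for now that no $d_w$ vanishes. The multiset $\{d_v\}$ has the same inertia as $M$ (Sylvester). Along a linear chain the numbers $d_v$ are exactly the partial continued fractions $[e_1,\dots,e_k]$.

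The first key step is to show that every positive $d_v$ can be arranged to occur on a linear branch, that is, on a maximal chain of degree $\le 2$ vertices hanging off a node, or on the whole tree if it is a chain. We root the tree at a node and compare with $M^{-1}$. The diagonal entry $(M^{-1})_{nn}$ at a node $n$ equals $1/d_n$ when $n$ is processed last, so weak negative definiteness forces $d_n<0$ for nodes. Applying this at each node in turn (re-rooting), and using the fact that the inertia is independent of the rooting, we conclude that positive contributions come only from vertices of degree $\le 2$ lying in chains.

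The second step eliminates a positive eigenvalue on a chain. A linear branch with weights $e_1,\dots,e_k$ (leaf first) represents a lens-space-type piece determined by its continued fraction $p/q=[e_1,\dots,e_k]$. Using the moves (blow up or down a $\pm1$ vertex, absorb a $0$ vertex of degree $2$, remove a $0$-leaf together with its neighbour), we replace the chain by the unique chain whose weights are all $\le -2$ and which has the same continued fraction value. This is the Hirzebruch--Jung normal form, and the needed adjustment at the node's framing is an integer shift.

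Concretely, a weight $e_i\ge 0$ is first reduced: we blow up with $-1$ vertices next to it to lower $e_i$ by one at a time, and each time a $+1$ appears we blow it down. This is the classical Riemenschneider/Neumann reduction, which terminates because the total $\sum \max(e_i+1,0)$, together with the length, decreases. Once all weights on the chain are $\le -2$, the chain block is negative definite. If the branch value becomes an integer shift, the continued fraction of the branch changes by absorbing an integer into the adjacent node weight, so the node's $d_n$ changes. However, $M^{-1}$ restricted to the nodes is an invariant of the manifold (it is the linking form data of Neumann's normal form). Hence weak negative definiteness is preserved, and the node terms $d_n$ stay negative. The degenerate cases $d_w=0$ are handled by first applying a $0$-chain absorption move.

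The main obstacle is the first step: rigorously showing that the $d_v$ at nodes are negative and that, after normalizing each branch, no positive eigenvalue remains. We expect to handle this by computing $d_n$ when the tree is rooted at $n$ and identifying $1/d_n$ with $(M^{-1})_{nn}<0$. Then, processing nodes one at a time in a leaf-to-root order over the "node tree", we would show that the Schur complement onto the nodes equals $(M^{-1}|_{\text{nodes}})^{-1}$. That Schur complement is negative definite, so the inertia of $M$ equals the inertia of the chain blocks plus $\#\text{nodes}$ negatives. Once every chain is in normal form, all eigenvalues are negative, completing the induction.
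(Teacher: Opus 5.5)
Your argument is correct in the form you reach in your last paragraph, but it takes a genuinely different route from the paper's. The paper works one positive pivot at a time. It runs the Duchon--Eisenbud--Neumann diagonalization toward a chosen node, and asserts in Lemma~\ref{lem:highdeg-negative} that positive pivots occur only at vertices of degree $\le 2$. It then removes each such pivot by a local sequence of $(A-)$ moves followed by $(B+)^{-1}$ or $(A+)^{-1}$, using Lemma~\ref{lem:neumann-signature} to see that each round lowers $n_+$ by one.

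You never track individual pivots. You bring every leg and every inter-node chain to Hirzebruch--Jung form with all weights $\le -2$, and note that $(M^{-1})_{NN}$ is unchanged. You then conclude at once: $M$ is congruent to $M_{CC}\oplus S$ with $S^{-1}=(M^{-1})_{NN}$. So the induction on $n_+$ you announce at the start is not actually needed. The paper's route gives short explicit move sequences with signature bookkeeping. Yours shows exactly where weak negative definiteness enters, and proves the ``positivity lives on chains'' principle in its correct form $n_+(M)=n_+(M_{CC})$. The paper's lemma asserts that principle without proof, and as a statement about individual pivots for a fixed root it fails, by the example below.

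Three points need repair.

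\textbf{The re-rooting argument.} Drop it from your first step. Knowing $d_n=1/(M^{-1})_{nn}<0$ when $n$ is the root only controls the diagonal of $(M^{-1})_{NN}$, not the pivots at the other nodes. For instance, join $n_1$ (weight $-4$, two $-2$ leaves) through a vertex $c$ of weight $1$ to $n_2$ (weight $0$, two $-3$ leaves). Here $S=\begin{pmatrix}-4&-1\\-1&-1/3\end{pmatrix}$ is negative definite, so the tree is weakly negative definite. Yet rooting at $n_1$ produces the pivot $2/3$ at $n_2$. The block Schur complement you fall back on is the right tool, and it needs no node-by-node processing. It is plain block inversion, valid once $M_{CC}$ is invertible, which holds after normalization.

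\textbf{Invariance of $(M^{-1})_{NN}$.} This cannot come from the linking form, which only sees $M^{-1}$ modulo $\Z$. The real reason is as follows. A blow-up replaces the lattice $L$ by $L\oplus\langle e\rangle$, and each old vertex $v$ becomes $v$ or $v\pm e$. The dual vectors $v^*\in L\otimes\Q$ of the surviving vertices still satisfy all their defining pairings, so they do not change. The same holds for $(C)$, which splits off a hyperbolic plane. Hence $(M^{-1})_{NN}$ is preserved, up to conjugation by a diagonal $\pm1$ matrix, as long as nodes stay nodes. If a contractible leg drops a node to degree $2$, the new node set is a subset; pass to the principal submatrix and renormalize the merged chain.

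\textbf{Degenerate cases.} Do not handle these by a $0$-absorption, which could merge two nodes; weak negative definiteness already rules them out. A leg equivalent to a $0$-leaf at $n$ forces $(M^{-1})_{nn}=0$. An inter-node chain equivalent to a single $0$-vertex $z$ gives $M^{-1}(e_{n_1}+e_{n_2})=e_z$, hence $(e_{n_1}+e_{n_2})^{T}M^{-1}(e_{n_1}+e_{n_2})=0$.

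Finally, the termination measure you quote for the chain reduction is not monotone, since blowing down a $-1$ raises its neighbours. Simply cite the normal form of a chain determined by its $\mathrm{SL}_2(\Z)$ class.
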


To prove the Main Theorem, we begin by reviewing the necessary background on
plumbing trees and their associated framing matrices, as well as the Neumann
moves and the diagonalization algorithm, which extracts the eigenvalues of the
framing matrix directly from the graph. The proof then proceeds by analyzing the
presence of positive eigenvalues and showing how they can be eliminated through
a finite sequence of Neumann moves. This reduction is carried out case by case,
ultimately yielding a negative definite plumbing tree.

Negative definite plumbing trees have been studied extensively. Classically, by
a theorem of Grauert~\cite{Grauert62}, a plumbed $3$--manifold arises as the
link of a normal complex surface singularity if and only if its intersection
matrix is negative definite, providing a direct bridge between low-dimensional
topology and singularity theory. More recently, negative definite plumbings
have played a central role in the study of quantum invariants: the
$\widehat{Z}$--invariant of Gukov, Pei, Putrov, and Vafa~\cite{GPPV} was
originally defined on the broader class of weakly negative definite plumbings,
but most subsequent work from Gukov, Katzarkov, and
Svoboda~\cite{GukovKatzarkovSvoboda} and Harichurn, N{\'e}methi, and
Svoboda~\cite{HarichurnNemethiSvoboda}---restricts to negative definite graphs
both for analytic reasons (convergence of the defining $q$-series) and for
access to tools such as lattice cohomology. The reduction
proved here justifies that restriction: the broader class of weakly negative
definite plumbing trees represents the same family of $3$--manifolds as the
negative definite ones.
\section{Preliminaries}

\subsection{Plumbing Trees and Framing Matrices}

Plumbing graphs and their associated intersection matrices are classical objects
in the study of plumbed $3$--manifolds; see, for example, Neumann~\cite{Neumann79}.
More generally, plumbing constructions and their applications to quantum invariants
and $3$--manifold topology are discussed in, for instance, Gukov and
Manolescu~\cite{GM21}. In this paper, we restrict attention to the case
where the underlying graph is a finite tree, since this is the setting relevant
for our main result.

A \emph{plumbing tree} $\Gamma$ is a finite tree whose vertices are equipped with
integer weights. We choose and fix an ordering of the vertex set and write
$$
V(\Gamma)=\{v_1,\dots,v_s\},
$$
where $s = |V(\Gamma)|$. For each $i$, let $m_i\in\Z$ denote the weight of the
vertex $v_i$, called its \emph{Euler number}. We write $E(\Gamma)$ for the edge
set of $\Gamma$.

The \emph{framing matrix} associated to $\Gamma$ is the symmetric $s\times s$
matrix
$$
B := (B_{ij})_{i,j=1}^s,
\qquad
B_{ij} =
\begin{cases}
m_i, & i=j, \\
1, & i\neq j \text{ and } \{v_i,v_j\}\in E(\Gamma), \\
0, & \text{otherwise}.
\end{cases}
$$

The plumbing tree $\Gamma$ (or equivalently, the matrix $B$) is called
\emph{negative definite} if all eigenvalues of $B$ are strictly negative.
The tree $\Gamma$ is called \emph{weakly negative definite} if the framing
matrix $B$ is invertible over $\Q$ and the restriction of $B^{-1}$ to the
subspace of $\Q^s$ spanned by the vertices of degree at least three is
negative definite; that is, all eigenvalues of this restriction are strictly
negative.

\subsection{Neumann Moves}

The local transformations used in plumbing calculus were introduced by
Neumann~\cite{Neumann79}. Each move replaces a local
configuration in a plumbing tree with another one and induces a
homeomorphism of the associated plumbed $3$--manifolds.

\begin{figure}[htbp]
\centering
\tikzset{
  vtx/.style={circle, fill=black, inner sep=2pt},
  leg/.style={thin},
  edge/.style={thin}
}
\begin{tikzpicture}[scale=1.0]

\begin{scope}[xshift=-5.5cm]

\node at (0,   0.5) {$m_1{+}\epsilon1$};
\node at (1.5, 0.5) {$\epsilon1$};
\node at (3,   0.5) {$m_2{+}\epsilon1$};

\node[vtx] (a1) at (0,0)   {};
\node[vtx] (a2) at (1.5,0) {};
\node[vtx] (a3) at (3,0)   {};
\draw[edge] (a1)--(a2)--(a3);

\draw[leg] (a1)--++(-0.5, 0.25);
\draw[leg] (a1)--++(-0.5, 0);
\draw[leg] (a1)--++(-0.5,-0.25);

\draw[leg] (a3)--++(0.5, 0.25);
\draw[leg] (a3)--++(0.5,-0.25);

\draw[->, thick] (1.5,-1.05) -- (1.5,-0.35);
\node[right] at (1.57,-0.7) {$\simeq$};

\node at (0,  -1.5) {$m_1$};
\node at (3,  -1.5) {$m_2$};

\node[vtx] (b1) at (0,-2.0)  {};
\node[vtx] (b2) at (3,-2.0)  {};
\draw[edge] (b1)--(b2);

\draw[leg] (b1)--++(-0.5, 0.25);
\draw[leg] (b1)--++(-0.5, 0);
\draw[leg] (b1)--++(-0.5,-0.25);

\draw[leg] (b2)--++(0.5, 0.25);
\draw[leg] (b2)--++(0.5,-0.25);

\node at (1.5,-2.9) {(A$\epsilon$)};
\end{scope}

\begin{scope}[xshift=0.5cm]

\node at (0,   0.5) {$m_1{+}\epsilon1$};
\node at (1.5, 0.5) {$\epsilon1$};

\node[vtx] (c1) at (0,0)   {};
\node[vtx] (c2) at (1.5,0) {};
\draw[edge] (c1)--(c2);

\draw[leg] (c1)--++(-0.5, 0.25);
\draw[leg] (c1)--++(-0.5, 0);
\draw[leg] (c1)--++(-0.5,-0.25);

\draw[->, thick] (0.75,-1.05) -- (0.75,-0.35);
\node[right] at (0.82,-0.7) {$\simeq$};

\node at (0.75,-1.5) {$m_1$};
\node[vtx] (d1) at (0.75,-2.0) {};

\draw[leg] (d1)--++(-0.5, 0.25);
\draw[leg] (d1)--++(-0.5, 0);
\draw[leg] (d1)--++(-0.5,-0.25);

\node at (0.75,-2.9) {(B$\epsilon$)};
\end{scope}

\begin{scope}[xshift=5.5cm]

\node at (0,   0.5) {$m_1$};
\node at (1.5, 0.5) {$0$};
\node at (3,   0.5) {$m_2$};

\node[vtx] (e1) at (0,0)   {};
\node[vtx] (e2) at (1.5,0) {};
\node[vtx] (e3) at (3,0)   {};
\draw[edge] (e1)--(e2)--(e3);

\draw[leg] (e1)--++(-0.5, 0.25);
\draw[leg] (e1)--++(-0.5, 0);
\draw[leg] (e1)--++(-0.5,-0.25);

\draw[leg] (e3)--++(0.5, 0.25);
\draw[leg] (e3)--++(0.5,-0.25);

\draw[->, thick] (1.5,-1.05) -- (1.5,-0.35);
\node[right] at (1.57,-0.7) {$\simeq$};

\node at (1.5,-1.5) {$m_1{+}m_2$};
\node[vtx] (f1) at (1.5,-2.0) {};

\draw[leg] (f1)--++(-0.5, 0.25);
\draw[leg] (f1)--++(-0.5, 0);
\draw[leg] (f1)--++(-0.5,-0.25);

\draw[leg] (f1)--++(0.5, 0.25);
\draw[leg] (f1)--++(0.5,-0.25);

\node at (1.5,-2.9) {(C)};
\end{scope}

\end{tikzpicture}
\caption{The Neumann moves on plumbing trees. Here $\epsilon \in \{+,-\}$.}
\label{fig:Neumann}
\end{figure}

The move $(A\epsilon)$ collapses a vertex of weight $\epsilon1$ and degree $2$
lying between two adjacent vertices. The move $(B\epsilon)$ removes a
terminal vertex of weight $\epsilon1$ and degree $1$. The move $(C)$
collapses a vertex of weight $0$ and degree $2$, merging the two
adjacent vertices into a single vertex whose weight is the sum of
their original weights.

The Neumann moves have the following effect on the eigenvalues of the framing
matrix $B$. The \emph{signature} of an invertible symmetric matrix $B$ is the
pair
$$
(n_+(B),\, n_-(B)),
$$
where $n_+(B)$ and $n_-(B)$ denote the numbers of positive and negative
eigenvalues of $B$, counted with multiplicity.

\begin{lemma}
\label{lem:neumann-signature}
For each Neumann move, let $B$ denote the framing matrix of the bottom graph
and $B_\circ$ the framing matrix of the top graph in Figure~\ref{fig:Neumann}.
The signatures are related as follows.
\begin{enumerate}
\item Move $(A-)$:
\[
n_+(B_\circ) = n_+(B)
\qquad\mbox{and}\qquad
n_-(B_\circ) = n_-(B) + 1.
\]
\item Move $(A+)$:
\[
n_+(B_\circ) = n_+(B) + 1
\qquad\mbox{and}\qquad
n_-(B_\circ) = n_-(B).
\]
\item Move $(B-)$:
\[
n_+(B_\circ) = n_+(B)
\qquad\mbox{and}\qquad
n_-(B_\circ) = n_-(B) + 1.
\]
\item Move $(B+)$:
\[
n_+(B_\circ) = n_+(B) + 1
\qquad\mbox{and}\qquad
n_-(B_\circ) = n_-(B).
\]
\item Move $(C)$:
\[
n_+(B_\circ) = n_+(B) + 1
\qquad\mbox{and}\qquad
n_-(B_\circ) = n_-(B) + 1.
\]
\end{enumerate}
\end{lemma}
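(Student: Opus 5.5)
The plan is to realize each move as a congruence of symmetric matrices over $\Q$ and then apply Sylvester's law of inertia. In every case I will exhibit an invertible rational matrix $P$ with
\[
P^{T} B_\circ P = B \oplus D,
\]
where $D$ is a diagonal matrix of size $1$ or $2$ whose entries have the required signs. Inertia is invariant under congruence and additive under direct sums, so the stated relation between $(n_+(B_\circ),n_-(B_\circ))$ and $(n_+(B),n_-(B))$ is then immediate. Concretely, $P$ is a sequence of simultaneous row and column operations ("completing the square") that eliminates the new vertex.

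For moves $(B\epsilon)$ and $(A\epsilon)$, let $e$ denote the new vertex of weight $\epsilon 1$; all other vertices are common to both graphs. For any vertex $w$ adjacent to $e$, the operation $w \mapsto w - \epsilon\, e$ is applied to both rows and columns, so that $\epsilon 1$ serves as the pivot.

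\begin{itemize}
\item In $(B\epsilon)$, $e$ is adjacent only to $v_1$. The operation changes the diagonal entry of $v_1$ by $-2\epsilon + \epsilon = -\epsilon$, turning $m_1+\epsilon$ into $m_1$. It also kills the off-diagonal entry between $v_1$ and $e$, and leaves the other entries of $v_1$ untouched. The result is $B \oplus (\epsilon)$.
\item In $(A\epsilon)$, the same operation is applied to both neighbours $v_1$ and $v_2$. Each diagonal entry drops by $\epsilon$, giving $m_1$ and $m_2$. The $(v_1,v_2)$ entry becomes $0 - \epsilon - \epsilon + \epsilon = -\epsilon$, so the result is $\widetilde B \oplus (\epsilon)$, where $\widetilde B$ is $B$ with the edge entry replaced by $-\epsilon$.
\end{itemize}

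The main obstacle is this sign discrepancy in $(A\epsilon)$, and I will handle it using the tree structure. Deleting the edge $v_1 v_2$ from $B$'s graph disconnects it into two components. Conjugating by the diagonal matrix that is $-1$ on the $v_2$-component and $+1$ elsewhere flips the sign of exactly that one off-diagonal entry. This is again a congruence, so $\widetilde B$ and $B$ have the same inertia. Hence $(A\epsilon)$ adds one eigenvalue of sign $\epsilon$, and likewise for $(B\epsilon)$, giving items (1)--(4).

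For $(C)$, let $z$ be the $0$-weighted vertex with neighbours $v_1,v_2$ in $B_\circ$. I first replace the basis vector $v_1$ by $v_1+v_2$. In the new basis:
\begin{itemize}
\item the new vector has self-pairing $m_1+m_2+2\cdot 0 = m_1+m_2$ (since $v_1,v_2$ are not adjacent);
\item it pairs with every other neighbour of $v_1$ or of $v_2$ with value $1$;
\item it pairs with $z$ with value $2$.
\end{itemize}
Next, $v_2$ is used to clear its own other neighbours' pairings with $z$ and with the merged vector. Specifically, I subtract multiples of $z$ from $v_2$ and from the merged vector, exploiting that $z$ pairs only with $v_1,v_2$. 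The aim is to isolate a hyperbolic block spanned by $z$ and a suitable combination involving $v_2$. The coefficients need to be checked carefully: a cleaner choice is to pair $z$ against $v_2$, whose self-pairing is $m_2$, and use $z$ to kill all of $v_2$'s other pairings. This gives $B_\circ \cong B \oplus \begin{pmatrix} 0 & 1\\ 1 & 0\end{pmatrix}$ up to congruence; if a leftover sign appears on an edge, I fix it with the same tree sign-flip trick as above. The $2\times 2$ block $\begin{pmatrix} 0&1\\1&0\end{pmatrix}$ has eigenvalues $\pm1$, which gives item (5).

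Finally, invertibility is preserved, since determinants change by factors $\epsilon$ or $-1$. So all signatures are those of invertible matrices, and the counts add as claimed.
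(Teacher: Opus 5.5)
Your argument is correct, but it is not the paper's route. The paper gives no argument of its own and simply cites Neumann and Gompf--Stipsicz. There the statement is the standard $4$--dimensional fact that $B$ is the intersection form of the plumbed $4$--manifold. The moves $(A\epsilon)$ and $(B\epsilon)$ are blow-ups, which add a $\langle\pm1\rangle$ summand. Move $(C)$ is a handle move, which adds a rank-two indefinite unimodular summand.

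You instead work purely with matrices, via explicit congruences and Sylvester's law of inertia. This makes the proof self-contained and shows exactly where the tree hypothesis enters: the diagonal $\pm1$ conjugation that repairs edge signs. Since all your operations have integer coefficients, you in fact get congruence over $\Z$, not just $\Q$. Your computations for $(A\epsilon)$ and $(B\epsilon)$ are right.

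The only step that is merely sketched is $(C)$, where you abandon the $v_1+v_2$ approach midway. Your ``cleaner choice'' does work, and it can be written out explicitly.
\begin{itemize}
\item Let $N_2$ be the neighbours of $v_2$ other than $z$. The span of $z,v_2$ has Gram matrix $\begin{pmatrix}0&1\\1&m_2\end{pmatrix}$, of determinant $-1$, hence signature $(1,1)$.
\item Replace $v_1$ by $v_1'=v_1-v_2+m_2z$, replace each $x\in N_2$ by $x-z$, and keep every other basis vector.
\item These vectors are orthogonal to $z$ and $v_2$, and $\langle v_1',v_1'\rangle=m_1+m_2$.
\item $v_1'$ pairs to $+1$ with the other neighbours of $v_1$ and to $-1$ with each $x-z$; all other pairings are unchanged.
\end{itemize}
Thus $B_\circ\cong\widetilde B\oplus\begin{pmatrix}0&1\\1&m_2\end{pmatrix}$, where $\widetilde B$ differs from $B$ only in the signs of the edges from the merged vertex to $N_2$. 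A single sign flip on all vertices lying on the far side of $v_2$ from $z$ repairs these edges simultaneously.

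Note that the block is $\begin{pmatrix}0&1\\1&m_2\end{pmatrix}$ rather than $\begin{pmatrix}0&1\\1&0\end{pmatrix}$. The two are congruent over $\Q$, and in any case only the negative determinant is needed.
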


\begin{proof}
See Neumann~\cite{Neumann79}
and Gompf and Stipsicz~\cite[Chapter~5]{GompfStipsicz}.
\end{proof}

\section{The Diagonalization Algorithm}
\label{sec:diag}

We summarize the diagonalization algorithm due to Duchon, originally appearing in
his doctoral dissertation, and presented in detail by Eisenbud and
Neumann~\cite{EisenbudNeumann,DuchonThesis}. Given a plumbing tree $\Gamma$ with
framing matrix $B$, the algorithm produces a diagonal matrix that is similar to
$B$. In particular, the signs of the diagonal entries record the signature of~$B$.

More precisely, the algorithm proceeds by recursively removing edges of $\Gamma$
through local simplification moves, modifying the vertex weights in the process
while allowing, in general, rational weights. When all edges have been removed, the
resulting graph consists of isolated vertices with weights
$$
d_1,\dots,d_s,
$$
yielding a diagonal matrix
$$
D = \operatorname{diag}(d_1,\dots,d_s).
$$
The values $d_1,\dots,d_s$ are exactly the eigenvalues of the framing matrix $B$,
repeated with multiplicity.

\subsection{Local Simplification Moves}

Choose a vertex of the plumbing tree and orient all edges toward it.
The orientation is used only to organize the application of the algorithm and does
not affect the resulting eigenvalues.

The local simplification moves act exactly like elementary row--column operations
on the framing matrix and preserve its eigenvalues. We describe the two cases
that occur in the diagonalization algorithm.

\subsubsection*{Case 1: A vertex whose incoming edges all come from leaves with nonzero weights}

\FloatBarrier
\begin{center}
\begin{minipage}{0.38\textwidth}
\centering
\begin{tikzpicture}[scale=1.1, thick,
  vtx/.style={circle, fill=black, inner sep=2pt},
  lbl/.style={above, yshift=2mm},
  midarrow/.style={
    postaction={decorate},
    decoration={markings, mark=at position 0.5 with {\arrow{>}}}
  }
]

\node[vtx,label={[lbl]{$e_j$}}] (ej) at (0,0) {};

\draw[midarrow] (ej) -- (-0.8,0);

\node[vtx,label={[right]{$e_1$}}] (e1) at (1.8,1.2) {};
\draw[midarrow] (e1) -- (ej);

\node[vtx,label={[right]{$e_k$}}] (ek) at (1.8,-1.2) {};
\draw[midarrow] (ek) -- (ej);

\node at (1.2,0.4) {$\vdots$};
\node at (1.2,0.0) {$\vdots$};
\node at (1.2,-0.4) {$\vdots$};

\end{tikzpicture}
\end{minipage}
\hfill
\begin{minipage}{0.12\textwidth}
\centering
\begin{tikzpicture}
\draw[line width=0.8pt,->] (0,0) -- (1.0,0);
\end{tikzpicture}
\end{minipage}
\hfill
\begin{minipage}{0.38\textwidth}
\centering
\begin{tikzpicture}[scale=1.1, thick,
  vtx/.style={circle, fill=black, inner sep=2pt},
  lbl/.style={above, yshift=2mm},
  midarrow/.style={
    postaction={decorate},
    decoration={markings, mark=at position 0.5 with {\arrow{>}}}
  }
]

\node[vtx,label={[lbl]{$e_j'$}}] (ejp) at (0,0) {};

\draw[midarrow] (ejp) -- (-0.8,0);

\node[vtx,label={[right]{$e_1$}}] (e1p) at (1.8,1.2) {};
\node[vtx,label={[right]{$e_k$}}] (ekp) at (1.8,-1.2) {};

\node at (1.8,0.4) {$\vdots$};
\node at (1.8,0.0) {$\vdots$};
\node at (1.8,-0.4) {$\vdots$};

\end{tikzpicture}
\end{minipage}
\end{center}

The weight of the adjacent vertex is modified according to
$$
e_j'
=
e_j
-
\frac{1}{e_1}
-
\cdots
-
\frac{1}{e_k},
$$
where $e_1,\dots,e_k$ are the weights of the leaves attached to the
same parent. This move reduces the number of edges.

\begin{remark}
Case~1 can only be applied when all descendant leaves have
nonzero weights. If a $0$--weighted leaf is adjacent, the formula
$$
e_j' = e_j - \sum_i \frac{1}{e_i}
$$
is not defined. In this situation, Case~2 must be applied instead to eliminate the
$0$--weighted leaf.
\end{remark}

\paragraph{Example (Case 1).}

\FloatBarrier
\begin{center}
\begin{minipage}{0.38\textwidth}
\centering
\begin{tikzpicture}[scale=1.1, thick,
  vtx/.style={circle, fill=black, inner sep=2pt},
  lbl/.style={above, yshift=2mm},
  midarrow/.style={
    postaction={decorate},
    decoration={markings, mark=at position 0.5 with {\arrow{>}}}
  }
]

\node[vtx,label={[lbl]{$2$}}] (ej) at (0,0) {};

\draw[midarrow] (ej) -- (-0.8,0);

\node[vtx,label={[lbl]{$3$}}] (e1) at (1.6,0.9) {};
\node[vtx,label={[lbl]{$6$}}] (e2) at (1.6,-0.9) {};

\draw[midarrow] (e1) -- (ej);
\draw[midarrow] (e2) -- (ej);

\end{tikzpicture}
\end{minipage}
\hfill
\begin{minipage}{0.12\textwidth}
\centering
\begin{tikzpicture}
\draw[line width=0.8pt,->] (0,0) -- (1.0,0);
\end{tikzpicture}
\end{minipage}
\hfill
\begin{minipage}{0.38\textwidth}
\centering
\begin{tikzpicture}[scale=1.1, thick,
  vtx/.style={circle, fill=black, inner sep=2pt},
  lbl/.style={above, yshift=2mm},
  midarrow/.style={
    postaction={decorate},
    decoration={markings, mark=at position 0.5 with {\arrow{>}}}
  }
]

\node[vtx,label={[lbl]{$\tfrac{3}{2}$}}] (ejp) at (0,0) {};

\draw[midarrow] (ejp) -- (-0.8,0);

\node[vtx,label={[lbl]{$3$}}] (f1) at (1.6,0.9) {};
\node[vtx,label={[lbl]{$6$}}] (f2) at (1.6,-0.9) {};

\end{tikzpicture}
\end{minipage}
\end{center}

\subsubsection*{Case 2: A leaf of weight $0$}

\FloatBarrier
\begin{center}
\begin{minipage}{0.38\textwidth}
\centering
\begin{tikzpicture}[scale=0.85, thick,
  vtx/.style={circle, fill=black, inner sep=2pt},
  lbl/.style={above, yshift=2mm},
  midarrow/.style={
    postaction={decorate},
    decoration={markings, mark=at position 0.5 with {\arrow{>}}}
  }
]


\node[vtx,label={[lbl]{$e_{k+1}$}}] (ekp) at (-1.4,0) {};
\draw[dashed] (ekp) -- ++(-0.6,0.6);
\draw[dashed] (ekp) -- ++(-0.6,-0.6);
\node at (-2.1,0) {$\vdots$};

\node[vtx,label={[lbl]{$e_j$}}] (ej) at (0,0) {};

\draw[midarrow] (ej) -- (ekp);

\node[vtx,label={[lbl]{$0$}}] (z) at (1.4,1.0) {};
\draw[midarrow] (z) -- (ej);

\node[vtx,label={[right]{$e_2$}}] (e2) at (1.4,0.35) {};
\node[vtx,label={[right]{$e_k$}}] (ek) at (1.4,-1.0) {};
\draw[midarrow] (e2) -- (ej);
\draw[midarrow] (ek) -- (ej);

\node at (1.4,-0.25) {$\vdots$};

\end{tikzpicture}
\end{minipage}
\hfill
\begin{minipage}{0.12\textwidth}
\centering
\begin{tikzpicture}
\draw[line width=0.8pt,->] (0,0) -- (1.0,0);
\end{tikzpicture}
\end{minipage}
\hfill
\begin{minipage}{0.38\textwidth}
\centering
\begin{tikzpicture}[scale=0.85, thick,
  vtx/.style={circle, fill=black, inner sep=2pt},
  lbl/.style={above, yshift=2mm}
]


\node[vtx,label={[lbl]{$e_{k+1}$}}] (ekp2) at (-1.4,0) {};
\draw[dashed] (ekp2) -- ++(-0.6,0.6);
\draw[dashed] (ekp2) -- ++(-0.6,-0.6);
\node at (-2.1,0) {$\vdots$};

\node[vtx,label={[lbl]{$-1$}}] at (0.3,0) {};

\node[vtx,label={[lbl]{$+1$}}] at (2.2,1.0) {};

\node[vtx,label={[right]{$e_2$}}] at (2.2,0.35) {};
\node[vtx,label={[right]{$e_k$}}] at (2.2,-1.0) {};

\node at (2.2,-0.25) {$\vdots$};

\end{tikzpicture}
\end{minipage}
\end{center}

If a leaf has weight $0$, the parent vertex $e_j$ is
replaced by a vertex of weight $-1$, and the $0$--leaf becomes a $+1$--leaf. The weights of all
other vertices remain unchanged.

\paragraph{Example (Case 2).}
We illustrate Case~2 of the diagonalization algorithm on a minimal plumbing tree
containing a single $0$--weighted leaf.

\begin{center}
\begin{tikzpicture}[scale=1.1, thick,
  vtx/.style={circle, fill=black, inner sep=2pt},
  lbl/.style={above, yshift=2mm},
  blbl/.style={below, yshift=-2mm},
  midarrow/.style={
    postaction={decorate},
    decoration={markings, mark=at position 0.5 with {\arrow{>}}}
  }
]

\node[vtx,label={[lbl]{$2$}}] (ej) at (0,0) {};

\node[vtx,label={[blbl]{$0$}}] (z) at (0,-1.2) {};
\draw[midarrow] (z) -- (ej);

\node[vtx,label={[lbl]{$2$}}] (e2) at (1.6,0) {};
\draw[midarrow] (e2) -- (ej);

\end{tikzpicture}
\end{center}

Since one of the leaves adjacent to the central vertex has weight $0$, Case~1
cannot be applied. We therefore apply Case~2.

\begin{center}
\begin{tikzpicture}[scale=1.1, thick,
  vtx/.style={circle, fill=black, inner sep=2pt},
  lbl/.style={above, yshift=2mm}
]

\node[vtx,label={[lbl]{$-1$}}] at (0,0) {};

\node[vtx,label={[lbl]{$+1$}}] at (0,-1.2) {};

\node[vtx,label={[lbl]{$2$}}] at (1.6,0) {};

\end{tikzpicture}
\end{center}

In this example, the parent vertex of weight $2$ is replaced by a vertex of
weight $-1$, and the $0$--leaf becomes a $+1$--leaf, while the weight of the other adjacent
vertex remains unchanged. The resulting isolated vertices have weights
$$
-1,\quad +1,\quad 2,
$$
which are the eigenvalues of the framing matrix~$B$ for this plumbing tree.

\subsection{Continued Fractions and Contractible Branches}

Following Moore--Tarasca~\cite{MooreTarasca}, we recall the relevant
definitions and results on contractible branches. A \emph{branch} of a
plumbing tree $\Gamma$ is a maximal linear subpath connecting a vertex of
degree at least three to a leaf through a sequence of degree-two vertices.
A branch is called \emph{contractible} if it can be collapsed to a single
vertex by a finite sequence of Neumann moves.

Contractible branches can be characterized using continued fractions. For
integers $a_0, \dots, a_n$ with $a_n \neq 0$, define the
\emph{negative continued fraction}
$$
\llbracket a_0, \dots, a_n \rrbracket
\;:=\;
a_0 - \cfrac{1}{a_1 - \cfrac{1}{\ddots - \cfrac{1}{a_n}}}
\;\in\mathbb{Q}.
$$
When $a_n = 0$, one sets
$$
\llbracket a_0, \dots, a_{n-2}, a_{n-1}, 0 \rrbracket
\;:=\;
\llbracket a_0, \dots, a_{n-2} \rrbracket.
$$
For a path $\Gamma_{wv}$ from a vertex $w$ of degree at least $3$ to a leaf $v$
through a sequence of degree-$2$ vertices, let $m_w, u_1, \dots, u_n, m_v$ be
the weights of the vertices from $w$ to $v$ taken in this order.
\begin{lemma}
\label{lem:moretarasca1}
Changing $\Gamma_{wv}$ through a sequence of Neumann moves preserves
$\llbracket m_w, u_1, \dots, u_n, m_v \rrbracket$.
\end{lemma}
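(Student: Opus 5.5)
Since every Neumann move is local, it suffices to check each move separately. We need the moves that act on the linear path $\Gamma_{wv}$ (from $w$ through $u_1,\dots,u_n$ to the leaf $v$) without changing its endpoints' roles: $w$ still has degree at least three, $v$ is still the leaf, and interior vertices keep degree two. For each such move we show that the value $\llbracket m_w,u_1,\dots,u_n,m_v\rrbracket$ is unchanged. The moves are $(A\pm)$ on an interior vertex or between $w$ and $u_1$, $(B\pm)$ at the leaf end, and $(C)$ on an interior $0$-vertex, each also read in the reverse direction.

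The basic tool is a single identity for a $\pm1$ entry inside a continued fraction. For integers $a,b$ and any tail value $x\in\Q\cup\{\infty\}$ we have
\[
a-\cfrac{1}{\epsilon 1-\cfrac{1}{b-\cfrac{1}{x}}}
\;=\;
(a-\epsilon)-\cfrac{1}{(b-\epsilon)-\cfrac{1}{x}},
\]
which is a direct computation. Read with the move $(A\epsilon)$, where the bottom graph carries $a-\epsilon$ and $b-\epsilon$ and the top carries $a,\epsilon1,b$, this gives exactly the claimed invariance. By induction on the nesting depth, the outer prefix of the continued fraction is a Möbius function of the inner value, so an equality of inner values propagates outward.

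The leaf move $(B\epsilon)$ is handled the same way. We have
\[
\llbracket \dots,c,\epsilon1\rrbracket=\llbracket\dots,c-\epsilon\rrbracket,
\]
because $c-1/\epsilon=c-\epsilon$. For $(C)$, deleting a $0$ between $a$ and $b$ merges them into $a+b$. The identity
\[
a-\cfrac{1}{0-\cfrac{1}{y}}=a+y
\]
then shows that
\[
\llbracket\dots,a,0,b,\dots\rrbracket=\llbracket\dots,a+b,\dots\rrbracket,
\]
with $y=\llbracket b,\dots\rrbracket$.

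The main obstacle is bookkeeping the degenerate cases. These are: a tail equal to $0$ or a denominator vanishing, so that values in $\Q\cup\{\infty\}$ must be used; and the convention $\llbracket a_0,\dots,a_{n-1},0\rrbracket=\llbracket a_0,\dots,a_{n-2}\rrbracket$ when the leaf has weight $0$. I would handle these by working projectively. Let $M(a)=\begin{pmatrix}a&-1\\1&0\end{pmatrix}$ and define the continued fraction as the projective value of $M(a_0)\cdots M(a_n)$ applied to $(1,0)^T$. The stated convention is then exactly the matrix identity $M(a)M(0)=\begin{pmatrix}-1&-a\\0&-1\end{pmatrix}$, which acts on $(1,0)^T$ as $-\mathrm{Id}$ up to projective scaling. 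Each move becomes a $2\times2$ matrix identity up to sign:
\[
M(a)M(\epsilon)M(b)=\pm M(a-\epsilon)M(b-\epsilon)
\]
for $(A\epsilon)$, the identity $M(a)M(0)M(b)=-M(a+b)$ for $(C)$, and the leaf identity for $(B\epsilon)$. Matrix identities are valid everywhere, so no denominator issues arise. Verifying these three $2\times2$ products completes the proof.
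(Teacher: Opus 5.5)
Your proof is correct, but the paper contains no proof of this lemma to compare it with: the lemma is quoted from Moore--Tarasca without proof. Your proposal therefore supplies a self-contained verification.

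The local identities all check out. The sign in the $(A\epsilon)$ identity is exactly $\epsilon$, that is, $M(a)M(\epsilon)M(b)=\epsilon\,M(a-\epsilon)M(b-\epsilon)$, and $M(a)M(0)M(b)=-M(a+b)$. The leaf case is a vector identity rather than a $2\times 2$ matrix identity: $M(c)M(\epsilon)(1,0)^T=\epsilon\,M(c-\epsilon)(1,0)^T$. That is all you need, since every product is applied to $(1,0)^T$.

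Your projective $SL_2(\Z)$ formulation adds well-definedness to the cited statement. The paper's bracket only assumes $a_n\neq 0$ plus a trailing-zero convention, yet intermediate denominators can still vanish; for example, $\llbracket 3,1,1\rrbracket=3-1/0$. So the invariant really lives in $\Q\cup\{\infty\}$. Your definition covers this uniformly, and it recovers the paper's trailing-zero convention from $M(a)M(0)(1,0)^T=-(1,0)^T$.

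Your restriction to moves supported on $\Gamma_{wv}$ is the correct reading of the statement, and it is needed. Here that means $w$ keeps degree $\ge 3$, $v$ stays a leaf, and interior vertices keep degree $2$. A blow-down on a different branch at $w$ shifts $m_w$ by $\pm 1$ and so changes the value by an integer.
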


\begin{lemma}
\label{lem:moretarasca2}
The path $\Gamma_{wv}$ is contractible if and only if
$\llbracket m_w, u_1, \dots, u_n, m_v \rrbracket$ is an integer. In
this case, $\llbracket m_w, u_1, \dots, u_n, m_v \rrbracket$ equals the
weight of the vertex resulting from the contraction.
\end{lemma}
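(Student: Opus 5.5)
We split the statement into the two implications and use Lemma~\ref{lem:moretarasca1} throughout as the invariance principle.

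\emph{Only if, and the value.} Suppose $\Gamma_{wv}$ is contractible. Then a finite sequence of Neumann moves turns the path into the single vertex $w$ with some integer weight $m'$. The continued fraction of this one-vertex path is $\llbracket m'\rrbracket = m'$. By Lemma~\ref{lem:moretarasca1}, $\llbracket m_w,u_1,\dots,u_n,m_v\rrbracket = m'$. So the value is an integer and equals the weight of the contracted vertex. This also settles the second sentence of the statement once the converse is proved.

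\emph{If.} Assume the value is an integer. We argue by induction on the number of non-$w$ vertices of the path, reducing it to a normal form in which every non-$w$ weight is $\le -2$. We use three shortening reductions, each of which preserves the continued fraction by Lemma~\ref{lem:moretarasca1}.
\begin{itemize}
\item An interior vertex of weight $\pm1$ is removed by $(A\pm)$.
\item A leaf of weight $\pm1$ is removed by $(B\pm)$; the predecessor changes by $\mp1$, which matches $u_n-1/(\pm1)$.
\item A leaf of weight $0$ is handled by repeated blow-ups, done separately for each sign $\epsilon\in\{+,-\}$. Insert an $\epsilon$-vertex on the edge $u_n v$ using the inverse of $(A\epsilon)$, then blow down the now $\epsilon$-weighted leaf using $(B\epsilon)$. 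The net effect is $u_n\mapsto u_n+\epsilon$ while a $0$-leaf remains. Iterating until $u_n=0$ and then applying $(C)$ deletes both $u_n$ and the leaf. This matches the convention $\llbracket\dots,u_{n-1},u_n,0\rrbracket=\llbracket\dots,u_{n-1}\rrbracket$.
\end{itemize}
An interior weight $0$ is likewise removed by $(C)$, which merges its two neighbours.

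\emph{Main obstacle: positive weights $\ge 2$.} No move shortens the path directly in this case. I would follow Neumann's normal form algorithm from \cite{Neumann79}. Blow up next to such a vertex with a $-1$, then propagate the resulting $\pm1$ and $0$ vertices using the reductions above. The aim is to show that a complexity, say the sum of the positive weights together with the length, strictly decreases. This step needs the most care, and if it cannot be made self-contained I would cite Neumann's normal form theorem for linear chains.

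\emph{Conclusion.} Once all non-$w$ weights are $\le -2$, set $x=\llbracket u_1,\dots,m_v\rrbracket$. A standard induction gives $x<-1$ whenever the tail is nonempty, so $-1<1/x<0$. Then $m_w-1/x$ lies strictly between $m_w$ and $m_w+1$ and is not an integer. This contradicts the hypothesis, so the tail must be empty, and the path has been contracted to $w$.

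The degenerate case in which a $0$-leaf is attached directly to $w$ needs a separate check. The convention then returns the empty fraction, and this case should be excluded or treated separately.
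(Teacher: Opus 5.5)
Your \emph{only if} direction is correct. So are the four shortening reductions: interior $\pm1$ by $(A\pm)$, leaf $\pm1$ by $(B\pm)$, interior $0$ by $(C)$, and the blow-up trick followed by $(C)$ for a $0$-leaf whose neighbour is not $w$.

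The gap is exactly the step you flag: you never carry out the elimination of weights $\ge 2$, and the sketch does not work as stated. The $-1$ you insert next to a weight $e\ge 2$ is an interior $-1$. Your own first reduction therefore removes it by $(A-)$ and simply undoes the blow-up, so the order of reductions matters. The proposed complexity is also not monotone under your moves. On a segment $2,-1,2$ of the tail, $(A-)$ produces $3,3$: the sum of positive weights rises by $2$ while the length drops by only $1$. As written, the \emph{if} direction rests on an unproved normalization, or else on citing Neumann's normal form theorem.

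The obstacle is self-inflicted, and you never need to touch weights $\ge 2$. Replace the target ``all non-$w$ weights $\le -2$'' by ``all non-$w$ weights, including $m_v$, satisfy $|u_i|\ge 2$''. Your standard induction then goes through with absolute values: if $|a|\ge 2$ and $|y|>1$, then $|a-1/y|\ge |a|-1/|y|>1$. Hence the tail value $x=\llbracket u_1,\dots,u_n,m_v\rrbracket$ is finite with $|x|>1$. So $0<|1/x|<1$ and $m_w-1/x\notin\Z$.

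The \emph{if} direction now closes by plain induction on the number of non-$w$ vertices. If the tail is nonempty and the value is an integer, some weight lies in $\{-1,0,1\}$. One of your reductions then applies, strictly shortens the path, and preserves the integer value. The only configuration with such a weight where no reduction applies is a $0$-leaf adjacent to $w$. Its value is $m_w-1/0=\infty$ (the empty fraction under the paper's convention), so it is never reached under the hypothesis.

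For comparison, the paper gives no proof of this lemma; it simply quotes it from Moore--Tarasca. With this one change, your argument becomes a self-contained proof using only Lemma~\ref{lem:moretarasca1} and the moves of Figure~\ref{fig:Neumann}.
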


By Lemma~\ref{lem:moretarasca1}, applying Neumann moves along a branch preserves
its continued fraction value. In particular, for a branch with weights
$m_w, u_1, u_2, \dots, u_n, m_v$ oriented toward a central vertex, the
diagonalization algorithm reduces the branch step by step. At each stage, the
outermost edge is eliminated. The successive reductions produce the values
$$
\llbracket u_1, \dots, u_n, m_v \rrbracket, \quad
\llbracket u_2, \dots, u_n, m_v \rrbracket, \quad \dots, \quad
\llbracket u_n, m_v \rrbracket, \quad m_v,
$$
as weights of the resulting isolated vertices. By Lemma~\ref{lem:moretarasca2},
the central vertex weight is then updated by subtracting
$1/\llbracket m_w, u_1,\dots,u_n, m_v\rrbracket$ from its original weight for
each adjacent incoming branch. Contracting a branch via Neumann moves does not affect the
eigenvalues supported on other branches or on the high-degree vertices, a property
that will be essential in the proof of the Main Theorem.

\subsection*{Termination}

Both Case~1 and Case~2 strictly reduce the number of edges in the plumbing tree.
Since the graph is finite, the algorithm must terminate after finitely many
steps. At termination, all edges have been removed and the plumbing tree has
become a disconnected graph consisting entirely of isolated vertices with
weights
$$
d_1,\dots,d_s.
$$

The framing matrix of a disconnected plumbing graph is diagonal, with diagonal
entries given by these vertex weights. Consequently, the final weights
$d_1,\dots,d_s$ are exactly the eigenvalues of the original framing matrix~$B$.
Thus, the diagonalization algorithm provides a direct combinatorial procedure
for reading off the signature of~$B$ from the weights of the resulting
isolated vertices.
\FloatBarrier

\section{Proof of the Main Theorem}

Throughout the proof, $B$ denotes the framing matrix associated to~$\Gamma$.
We begin with the key lemma, which shows that weak negative definiteness
forces every high-degree vertex to contribute a strictly negative eigenvalue
after the diagonalization algorithm is applied.

\begin{lemma}
\label{lem:highdeg-negative}
Let $\Gamma$ be a weakly negative definite plumbing tree with framing matrix $B$.
After applying the diagonalization algorithm, any positive eigenvalue of $B$
arises from a vertex of degree at most $2$ in $\Gamma$.
\end{lemma}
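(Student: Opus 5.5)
We prove Lemma~\ref{lem:highdeg-negative} by running the diagonalization algorithm in a specific order chosen to match the definition of weak negative definiteness. Let $N\subseteq V(\Gamma)$ be the set of vertices of degree at least three, $n=|N|$, and let $M=V(\Gamma)\setminus N$ be the vertices of degree at most $2$. Order the vertices so that those in $M$ come first, and write the framing matrix in block form
\[
B=\begin{pmatrix} B_{MM} & B_{MN}\\ B_{NM} & B_{NN}\end{pmatrix}.
\]
Removing $N$ from the tree leaves a disjoint union of linear chains. We first eliminate every edge incident to $M$, working inward along each chain, and only then treat the edges among vertices of $N$. Each local simplification move is a congruence $B\mapsto P^{T}BP$ by an elementary matrix. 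Its final diagonal entries $d_1,\dots,d_s$ are attached to vertices, and by Sylvester's law of inertia their signs give the signature of $B$. That is the sense in which the algorithm reads off ``eigenvalues'' here.

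\textbf{Step 1: the Schur complement.} Suppose first that $B_{MM}$ is invertible, equivalently that no $0$ is produced on the chains. Eliminating the $M$-part by congruence leaves the diagonal entries coming from $M$, namely the continued fraction values of the chains, together with the Schur complement
\[
S=B_{NN}-B_{NM}B_{MM}^{-1}B_{MN}
\]
on the $N$-coordinates. The entries assigned afterwards to vertices of $N$ are then exactly the diagonal entries obtained by diagonalizing $S$ by congruence. By the block inversion formula, the $N\times N$ block of $B^{-1}$ is $S^{-1}$. Weak negative definiteness says precisely that $(B^{-1})_{NN}=S^{-1}$ is negative definite. Hence $S$ is negative definite, so every diagonal entry attached to a vertex of degree at least three is strictly negative. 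Any positive $d_i$ must therefore sit at a vertex of $M$, which is the claim.

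\textbf{Step 2: degenerate chains (main obstacle).} The difficulty is Case~2, where a $0$ appears on a chain and $B_{MM}$ is singular. Then Case~2 may replace an adjacent weight by $-1$ and turn a $0$-leaf into $+1$. If that adjacent vertex lies in $N$, the new entries are no longer a Schur complement, and one must check that the $+1$ is attached to the chain vertex, not to the high-degree one. In the paper's Case~2 picture this is indeed so: the $+1$ goes to the former $0$-leaf in $M$ and the $-1$ to the parent.

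I would handle this by a small perturbation. Replace the weights on $M$ by $m_i+t$ with $t$ small and generic, so that $B_{MM}(t)$ is invertible. Since $B$ is invertible, $B(t)^{-1}\to B^{-1}$, so $S(t)^{-1}$ remains negative definite for small $t$. Step~1 then gives $n$ negative entries at $N$, and the remaining entries account for the signature of $B_{MM}(t)$, which is the count contributed by $M$. Because the inertia of $B$ is locally constant, one obtains
\[
n_+(B)=n_+(B_{MM}(t)),
\]
so all positive directions are accounted for by degree-$\le 2$ vertices. Matching this count with the output of the unperturbed Case~2 bookkeeping (the $0$-leaf becoming $+1$, the parent becoming $-1$) then establishes the lemma in this case as well.
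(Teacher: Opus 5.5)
Your Step~1 is correct, and it takes a genuinely different route from the paper. The paper's proof is only the assertion that the lemma ``directly follows from the definition.'' Your block congruence $B\cong B_{MM}\oplus S$, with $S^{-1}=(B^{-1})_{NN}$, is the actual mechanism by which weak negative definiteness enters. It yields the order-free conclusion $n_+(B)=n_+(B_{MM})$: all positive inertia lives on the chains left after deleting the nodes. You are also right to treat the $d_i$ as congruence pivots rather than eigenvalues.

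Step~2, however, does not prove what it claims. The perturbation gives only the count $n_+(B)=n_+(B_{MM}(t))$. Equality of counts says nothing about \emph{where} the unperturbed run puts its positive entries, so the closing ``matching'' sentence is not an argument. Fortunately the case you are treating never occurs. By Jacobi's complementary-minor identity, $\det B_{MM}=\det B\cdot\det\bigl((B^{-1})_{NN}\bigr)\neq 0$; equivalently, apply block inversion to $B^{-1}$, whose $NN$-block is invertible. So $B_{MM}$ is always invertible under the hypothesis. Your ``equivalently, no $0$ is produced'' is also inaccurate: a branch with weights $(0,0)$ has an invertible chain matrix yet produces a $0$-leaf. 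Such zeros are harmless, though. After the block congruence clears the $M$--$N$ entries, each chain is diagonalized as a standalone path. Any Case~2 pairing then stays inside the chain, because an invertible chain cannot end on a $0$ pivot. Replace Step~2 by this observation and the proof is complete.

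Be careful, though, about calling Step~1 a run of the paper's algorithm. When a chain joins two nodes, its vertices never become leaves before one of those nodes has received its pivot. Eliminating $M$ first therefore requires pivoting on a degree-$2$ vertex $v$, which creates an edge of weight $-1/m_v$ between the two nodes. That is a valid congruence, but it is not a Case~1 or Case~2 move. This distinction matters. Let $x_1$ (weight $0$, leaves $-2,-4$) and $x_2$ (weight $-5$, leaves $-2,-2$) be joined through a vertex $v$ of weight $1$. Here $S=\begin{pmatrix}-1/4&-1\\-1&-5\end{pmatrix}$ is negative definite, so the tree is weakly negative definite. Yet the paper's algorithm oriented toward $x_2$ assigns $x_1$ the pivot $0+\tfrac12+\tfrac14=\tfrac34>0$, then $-\tfrac13$ to $v$ and $-1$ to $x_2$. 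So the lemma holds only for an elimination order like yours. You should state it that way, or simply as $n_+(B)=n_+(B_{MM})$, rather than for ``the diagonalization algorithm'' with arbitrary orientation.
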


\begin{proof}
This directly follows from the definition of weakly negative definite. 
\end{proof}

We now carry out the elimination of positive eigenvalues case by case according
to the structure of $\Gamma$. In each case we orient all edges toward a chosen
high-degree vertex (or an arbitrary vertex if none exists) before applying the
diagonalization algorithm; the resulting eigenvalues do not depend on this choice.

\subsection{Path Case: Linear tree with a positive leaf weight}
\label{sec:branch-case}

Suppose that $\Gamma$ is a plumbing tree all of whose vertices have
degree at most $2$, so that $\Gamma$ is a path. Label the vertices
$v_1, \dots, v_r$ and their integer weights $m_1, \dots, m_r$.
After applying the diagonalization algorithm, the graph becomes disconnected and
each vertex contributes a weight, which we denote
$$
d_1,\dots,d_r,
$$
and which are precisely the eigenvalues of the framing matrix~$B$. If all $d_i < 0$, then the plumbing tree is already negative definite
and no further modification is required. Assume instead that there exists at
least one positive eigenvalue $d_i$ corresponding to a leaf. In this case
$d_i = m_i$ from the diagonalization algorithm. Apply the sequence of Neumann
moves
$$
(B+)^{-1}(A-)^{\,n},
$$
where $n := m_i - 1$. Each iteration of the move $(A-)$ introduces a new vertex adjacent to the leaf. By Lemma~\ref{lem:neumann-signature}, each application of
$(A-)$ preserves the number of positive eigenvalues and introduces one
additional negative eigenvalue, while the final application of $(B+)^{-1}$
removes a positive eigenvalue. By Lemma~\ref{lem:moretarasca1}, these moves preserve the eigenvalue $d_j$ with $j \neq i$. Consequently, this sequence
eliminates the positive eigenvalue $d_i$ without introducing any new positive
eigenvalues. Repeating this procedure for each positive leaf weight
yields a plumbing tree whose framing matrix has no positive eigenvalues
corresponding to leaves. After diagonalization, however, positive continued
fraction values may still appear at interior vertices of the path; the next
section treats this case.

\medskip
\noindent\textbf{Example.}
We illustrate this procedure with an explicit example. At each stage, we record
the signature $(n_+, n_-)$ of the framing matrix, where $n_+$ and $n_-$ denote
the numbers of positive and negative eigenvalues respectively.

\FloatBarrier
\begin{figure}[htbp]
\centering
\resizebox{\textwidth}{!}{%
\begin{tikzpicture}[x=2.4cm, y=-0.8cm]
\tikzset{
  vtx/.style={circle, fill=black, inner sep=2pt},
  arr/.style={->, thick}
}


\node[vtx,label=right:{$5$}]  at (0,2)   {};
\node[vtx,label=right:{$0$}]  at (0,3)   {};
\draw (0,2)--(0,3);
\node[font=\small] at (0,4.2) {$(1,1)$};
\draw[arr] (0.4,2.5) -- node[above,font=\small] {$A-$} (0.6,2.5);

\node[vtx,label=right:{$4$}]  at (1,1.5) {};
\node[vtx,label=right:{$-1$}] at (1,2.5) {};
\node[vtx,label=right:{$-1$}] at (1,3.5) {};
\draw (1,1.5)--(1,3.5);
\node[font=\small] at (1,4.7) {$(1,2)$};
\draw[arr] (1.4,2.5) -- node[above,font=\small] {$A-$} (1.6,2.5);

\node[vtx,label=right:{$3$}]  at (2,1)   {};
\node[vtx,label=right:{$-1$}] at (2,2)   {};
\node[vtx,label=right:{$-2$}] at (2,3)   {};
\node[vtx,label=right:{$-1$}] at (2,4)   {};
\draw (2,1)--(2,4);
\node[font=\small] at (2,5.2) {$(1,3)$};
\draw[arr] (2.4,2.5) -- node[above,font=\small] {$A-$} (2.6,2.5);

\node[vtx,label=right:{$2$}]  at (3,0.5) {};
\node[vtx,label=right:{$-1$}] at (3,1.5) {};
\node[vtx,label=right:{$-2$}] at (3,2.5) {};
\node[vtx,label=right:{$-2$}] at (3,3.5) {};
\node[vtx,label=right:{$-1$}] at (3,4.5) {};
\draw (3,0.5)--(3,4.5);
\node[font=\small] at (3,5.7) {$(1,4)$};
\draw[arr] (3.4,2.5) -- node[above,font=\small] {$A-$} (3.6,2.5);

\node[vtx,label=right:{$1$}]  at (4,0)   {};
\node[vtx,label=right:{$-1$}] at (4,1)   {};
\node[vtx,label=right:{$-2$}] at (4,2)   {};
\node[vtx,label=right:{$-2$}] at (4,3)   {};
\node[vtx,label=right:{$-2$}] at (4,4)   {};
\node[vtx,label=right:{$-1$}] at (4,5)   {};
\draw (4,0)--(4,5);
\node[font=\small] at (4,6.2) {$(1,5)$};
\draw[arr] (4.4,2.5) -- node[above,font=\small] {$B+$} (4.6,2.5);

\node[vtx,label=right:{$-2$}] at (5,0.5) {};
\node[vtx,label=right:{$-2$}] at (5,1.5) {};
\node[vtx,label=right:{$-2$}] at (5,2.5) {};
\node[vtx,label=right:{$-2$}] at (5,3.5) {};
\node[vtx,label=right:{$-1$}] at (5,4.5) {};
\draw (5,0.5)--(5,4.5);
\node[font=\small] at (5,5.7) {$(0,5)$};

\end{tikzpicture}%
}
\caption{Path case example. Successive $(A-)$ moves decrease the leaf weight
by one while introducing negative eigenvalues; the final $(B+)$ eliminates the
positive eigenvalue. Signatures $(n_+, n_-)$ record the evolution of the
framing matrix.}
\label{fig:branch-case-example}
\end{figure}
\FloatBarrier

\subsection{Interior Path Case: Positive weight at a non-leaf vertex}
\label{sec:interior-branch-case}

Suppose the plumbing tree $\Gamma$ is a path with weights $m_1, \dots, m_r$
and that after applying the diagonalization algorithm, the first $n$ vertices
contribute negative continued fraction values $d_1, \dots, d_n < 0$, but the
$(n+1)$-th vertex contributes a positive continued fraction value $d_{n+1} > 0$.
We proceed by induction on $n$ to show that this positive value can be eliminated
by a finite sequence of Neumann moves.

\medskip
\noindent\textbf{Base case.}
When $n = 0$, the positive continued fraction value $d_1$ is at a terminal
vertex, and the result follows from Section~\ref{sec:branch-case}.

\medskip
\noindent\textbf{Inductive step.}
Assume the result holds whenever the positive continued fraction value occurs at
position $\leq n$. Suppose $d_{n+1} > 0$. Apply the sequence of Neumann moves
$$
(A+)^{-1}(A-)^{k},
$$
where $k := m_{n+1} - 1$. Each iteration of the move $(A-)$ introduces a new vertex before the vertex $v_{n+1}$. Each application of $(A-)$ reduces $m_{n+1}$ by
exactly $1$, so after $k$ applications the updated value satisfies $m_{n+1} = 1$.
By Lemma~\ref{lem:neumann-signature}, each application of $(A-)$ increases $n_-$
by $1$ but leaves $n_+$ unchanged, so the number of positive eigenvalues remains
constant throughout.
Once $m_{n+1} = 1$, a single application of $(A+)^{-1}$
decreases $n_+$ by $1$, eliminating the positive eigenvalue entirely.
By Lemma~\ref{lem:moretarasca1}, these moves preserve the eigenvalues of the rest of the tree. The vertex
at position $n+1$ now contributes a negative eigenvalue. If the continued
fraction value at any subsequent vertex also requires correction, the same
procedure applies to it afterwards. The inductive hypothesis then guarantees
termination after finitely many steps.

\medskip
\noindent\textbf{Example.}
We illustrate the output of the diagonalization algorithm applied to the
path $(-2, 3, c)$, where $-2$ and $3$ are the weights of the first two vertices
and $c$ is left general. The value of $c$ does not affect the elimination of the
positive eigenvalue corresponding to the internal vertex weighted $3$; if the
weight at the third vertex also requires correction, the same procedure applies
to it afterwards. We apply $(A-)$ twice followed by a single $(A+)^{-1}$. The
sign pattern beneath each diagram uses $+$ and $-$ for eigenvalues that are
positive and negative respectively; the symbol $*$ denotes the eigenvalue
contributed by the vertex with weight $c$, whose sign is not determined by this
step and is handled afterwards.
\begin{figure}[H]
\centering
\tikzset{
  vtx/.style={circle, fill=black, inner sep=2pt},
  arr/.style={->, thick}
}
\begin{tikzpicture}[x=3.5cm, y=-0.8cm]

\node[vtx,label=right:{$-2$}] at (0,0) {};
\node[vtx,label=right:{$3$}]  at (0,1) {};
\node[vtx,label=right:{$c$}]  at (0,2) {};
\draw (0,0)--(0,2);
\node[font=\small] at (0,3.2) {$(-,+,*)$};
\draw[arr] (0.4,1) -- node[above,font=\small] {$A-$} (0.6,1);

\node[vtx,label=right:{$-3$}] at (1,0) {};
\node[vtx,label=right:{$-1$}]  at (1,1) {};
\node[vtx,label=right:{$2$}] at (1,2) {};
\node[vtx,label=right:{$c$}] at (1,3) {};
\draw (1,0)--(1,3);
\node[font=\small] at (1,4.2) {$(-,-,+,*)$};
\draw[arr] (1.4,1.5) -- node[above,font=\small] {$A-$} (1.6,1.5);

\node[vtx,label=right:{$-3$}] at (2,0) {};
\node[vtx,label=right:{$-2$}]  at (2,1) {};
\node[vtx,label=right:{$-1$}] at (2,2) {};
\node[vtx,label=right:{$1$}] at (2,3) {};
\node[vtx,label=right:{$c$}] at (2,4) {};
\draw (2,0)--(2,4);
\node[font=\small] at (2,5.2) {$(-,-,-,+,*)$};
\draw[arr] (2.4,2) -- node[above,font=\small] {$(A+)^{-1}$} (2.6,2);

\node[vtx,label=right:{$-3$}] at (3,0) {};
\node[vtx,label=right:{$-2$}] at (3,1) {};
\node[vtx,label=right:{$-2$}] at (3,2) {};
\node[vtx,label=right:{$c$}] at (3,3) {};
\draw (3,0)--(3,3);
\node[font=\small] at (3,4.2) {$(-,-,-,*)$};

\end{tikzpicture}
\caption{Interior path case example. The positive weight
at the interior vertex is reduced by successive $(A-)$ moves until it
reaches $1$, then the vertex is eliminated by $(A+)^{-1}$. The value of $c$ is left
general and handled afterwards if needed. Lemma~\ref{lem:moretarasca1} ensures
weak negative definiteness is preserved throughout.}
\label{fig:interior-branch-case}
\end{figure}
\FloatBarrier

\subsection{Star Case: Exactly one vertex of degree $\ge 3$}
\label{sec:star-case}

Suppose that the plumbing tree $\Gamma$ contains exactly one vertex $x$ of degree
at least $3$. We refer to $x$ as the \emph{central vertex} and write
$b_1,\dots,b_k$ for the branches attached to $x$, where $k\ge 3$. By
Lemma~\ref{lem:highdeg-negative}, $x$ contributes a strictly negative eigenvalue
after diagonalization. All positive eigenvalues therefore arise from the branches.

\medskip
\noindent\textbf{Elimination of positive branch eigenvalues.}
Any positive branch eigenvalue is supported on a vertex of degree $1$ or $2$ and
may be eliminated independently using the path case or interior path case
procedure as appropriate. The central vertex remains untouched throughout and
continues to contribute a negative eigenvalue. Repeating for each positive branch
eigenvalue eliminates all positivity in the spectrum of the framing matrix.

\medskip
\noindent\textbf{Example.}
We illustrate the star case with a simple three-armed example.
Figure~\ref{fig:star-case-diag} shows the orientation and application of the
diagonalization algorithm; Figure~\ref{fig:star-case-neumann} shows the
subsequent Neumann moves. The signature $(n_+, n_-)$ is recorded beneath each
diagram.

\begin{figure}[t]
\centering
\tikzset{
  vtx/.style={circle, fill=black, inner sep=2pt},
  midarrow/.style={
    postaction={decorate},
    decoration={markings, mark=at position 0.5 with {\arrow{>}}}
  }
}
\begin{tikzpicture}[scale=0.75]

\node[vtx,label=above:{$2$}]          (t)  at (0,3)  {};
\node[vtx,label=below:{$-2$}]         (c)  at (0,0)  {};
\node[vtx,label=left:{$-1$}]          (l)  at (-2,0) {};
\node[vtx,label=right:{$-1$}]         (r)  at (2,0)  {};
\draw[midarrow] (t)--(c);
\draw[midarrow] (l)--(c);
\draw[midarrow] (r)--(c);
\node[font=\small] at (0,-1.2) {(a)};

\draw[->,thick] (3,1.5)--node[above,font=\small]{diag.}(5,1.5);

\node[vtx,label=above:{$2$}]                    (t2) at (8,3)  {};
\node[vtx,label=above:{$-\tfrac{1}{2}$}]        (c2) at (8,0)  {};
\node[vtx,label=above:{$-1$}]                   (l2) at (6,0)  {};
\node[vtx,label=above:{$-1$}]                   (r2) at (10,0) {};
\node[font=\small] at (8,-1.2) {$(1,3)$};
\node[font=\small] at (8,-1.9) {(b)};

\end{tikzpicture}
\caption{Star case example, part 1. (a) Three-armed star with edges oriented
toward the central vertex. (b) After the diagonalization algorithm: all edges
removed, yielding isolated vertices with eigenvalues $2, -\tfrac{1}{2}, -1, -1$,
signature $(1,3)$.}
\label{fig:star-case-diag}
\end{figure}

\begin{figure}[t]
\centering
\tikzset{
  vtx/.style={circle, fill=black, inner sep=2pt}
}
\begin{tikzpicture}[scale=0.75]

\node[vtx,label=above:{$2$}]                   (t)  at (0,3)   {};
\node[vtx,label=below:{$-2$}]                  (c)  at (0,0)   {};
\node[vtx,label=left:{$-1$}]                   (l)  at (-2,0)  {};
\node[vtx,label=right:{$-1$}]                  (r)  at (2,0)   {};
\draw (t)--(c); \draw (l)--(c); \draw (r)--(c);
\node[font=\small] at (0,-1.2) {$(1,3)$};
\node[font=\small] at (0,-1.9) {(c)};

\draw[->,thick] (3,1.5)--node[above,font=\small]{$A-$}(5,1.5);

\node[vtx,label=above:{$1$}]                   (t2) at (8,3)   {};
\node[vtx,label=right:{$-1$}]                  (m2) at (8,1.5) {};
\node[vtx,label=below:{$-3$}]                  (c2) at (8,0)   {};
\node[vtx,label=left:{$-1$}]                   (l2) at (6,0)   {};
\node[vtx,label=right:{$-1$}]                  (r2) at (10,0)  {};
\draw (t2)--(m2)--(c2); \draw (l2)--(c2); \draw (r2)--(c2);
\node[font=\small] at (8,-1.2) {$(1,4)$};
\node[font=\small] at (8,-1.9) {(d)};

\draw[->,thick] (11,1.5)--node[above,font=\small]{$(B+)^{-1}$}(13,1.5);

\node[vtx,label=above:{$-2$}]                  (t3) at (16,3)  {};
\node[vtx,label=below:{$-3$}]                  (c3) at (16,0)  {};
\node[vtx,label=left:{$-1$}]                   (l3) at (14,0)  {};
\node[vtx,label=right:{$-1$}]                  (r3) at (18,0)  {};
\draw (t3)--(c3); \draw (l3)--(c3); \draw (r3)--(c3);
\node[font=\small] at (16,-1.2) {$(0,4)$};
\node[font=\small] at (16,-1.9) {(e)};

\end{tikzpicture}
\caption{Star case example, part 2. (c) Original graph with signature $(1,3)$.
(d) After $(A-)$: signature $(1,4)$. (e) After $(B+)$: all eigenvalues negative,
signature $(0,4)$.}
\label{fig:star-case-neumann}
\end{figure}

After eliminating the positive branch eigenvalue, all eigenvalues of the framing
matrix are strictly negative. Hence, the resulting plumbing tree is negative
definite, completing the proof in the star case.

\subsection{General Case: Multiple vertices of degree $\ge 3$}

We now consider the general case, in which the plumbing tree $\Gamma$ contains
more than one vertex of degree at least $3$. Let $\{x_1,\dots,x_r\}$ denote the
set of all such vertices and let $B$ be the associated framing matrix.

Orient each edge of $\Gamma$ toward the closest vertex among the $\{x_i\}$, with
ties broken arbitrarily. Vertices lying in linear chains between high-degree
vertices or at the ends of such chains are handled by the branch case. By
Lemma~\ref{lem:highdeg-negative}, every positive eigenvalue of $B$ arises from a
vertex of degree at most $2$. Each such eigenvalue lies either on a linear chain
or on a branch attached to some $x_i$. If it occurs at a terminal vertex, it is
eliminated by the path case sequence $(B+)^{-1}(A-)^{\,n}$ for an appropriate $n$. If
it occurs at an interior vertex, it is eliminated by the interior path case
procedure $(A+)^{-1}(A-)^{k}$ for an appropriate $k$. In either situation the
procedure introduces no new positive eigenvalues and does not affect the signature
at any $x_i$.

\medskip
\noindent\textbf{Conclusion.}
Repeating the elimination procedure for each positive eigenvalue yields a plumbing
tree in which all eigenvalues of the framing matrix are strictly negative. Hence
every weakly negative definite plumbing tree can be transformed into a negative
definite one by a finite sequence of Neumann moves. This completes the proof of
the Main Theorem.


\begin{thebibliography}{99}

\footnotetext{This paper has been submitted in partial fulfillment of the
requirements for the master's degree at Virginia Commonwealth University.}

\footnotetext{Supported in part by NSF Grant DMS-2404896 (PI: Nicola Tarasca).}

\bibitem{AJK}
R.~Akhmechet, P.~K. Johnson, and V.~Krushkal,
\emph{Lattice cohomology and $q$-series invariants of $3$--manifolds},
Journal f{\"u}r die reine und angewandte Mathematik (Crelle's Journal)
\textbf{798} (2023), 269--294.

\bibitem{DuchonThesis}
N.~Duchon,
\emph{Th\`ese de doctorat},
University of Maryland, 1982.

\bibitem{EisenbudNeumann}
D.~Eisenbud and W.~D. Neumann,
\emph{Three-Dimensional Link Theory and Invariants of Plane Curve Singularities},
Annals of Mathematics Studies, Vol.~110,
Princeton University Press, Princeton, NJ, 1985.

\bibitem{GompfStipsicz}
R.~E. Gompf and A.~I. Stipsicz,
\emph{4--Manifolds and Kirby Calculus},
Graduate Studies in Mathematics, Vol.~20,
American Mathematical Society, Providence, RI, 1999.

\bibitem{Grauert62}
H.~Grauert,
\emph{{\"U}ber Modifikationen und exzeptionelle analytische Mengen},
Mathematische Annalen \textbf{146} (1962), 331--368.

\bibitem{GukovKatzarkovSvoboda}
S.~Gukov, L.~Katzarkov, and J.~Svoboda,
\emph{$\widehat{Z}$ and splice diagrams},
Symmetry, Integrability and Geometry: Methods and Applications
\textbf{21} (2025), 073.

\bibitem{GM21}
S.~Gukov and C.~Manolescu,
\emph{A two-variable series for knot complements},
Quantum Topology \textbf{12} (2021), no.~1.

\bibitem{GPPV}
S.~Gukov, D.~Pei, P.~Putrov, and C.~Vafa,
\emph{$BPS$ spectra and $3$--manifold invariants},
Journal of Knot Theory and Its Ramifications \textbf{29} (2020), no.~2, 2040003.

\bibitem{HarichurnNemethiSvoboda}
S.~Harichurn, A.~N{\'e}methi, and J.~Svoboda,
\emph{$\Delta$ invariants of plumbed manifolds},
arXiv:2412.02042 [math.GT], 2024.

\bibitem{MooreTarasca}
A.~H. Moore and N.~Tarasca,
\emph{Root lattices and invariant series for plumbed $3$--manifolds},
arXiv:2405.14972 [math.GT], 2024.

\bibitem{Neumann79}
W.~D. Neumann,
\emph{An invariant of plumbed homology spheres},
in Topology Symposium, Siegen 1979 (U.~Koschorke and W.~D. Neumann, eds.),
Lecture Notes in Mathematics, vol.~788, Springer, Berlin, 1980, pp.~125--144.

\end{thebibliography}
\end{document}